\def\Ree{\mathbb{R}}
\def\Nee{\mathbb{N}}
\begin{document}

\title{$Q$-curvature flow for GJMS operators with non-trivial kernel }

\author{Ali Fardoun}
\address{Laboratoire de Math\'ematiques, UMR 6205 CNRS 
Universit\'e de Bretagne Occidentale 
6 Avenue Le Gorgeu, 29238 Brest Cedex 3   
France}
\email{Ali.Fardoun@univ-brest.fr}
\author{Rachid Regbaoui }
\address{Laboratoire de Math\'ematiques, UMR 6205 CNRS 
Universit\'e de Bretagne Occidentale 
6 Avenue Le Gorgeu, 29238 Brest Cedex 3   
France}
\email{Rachid.Regbaoui@univ-brest.fr}

\subjclass[2000]{  53A30 , 53C21 , 35K25}
\keywords{Geometric PDE's, Variational method, $Q$-curvature}

\begin{abstract}
We investigate the prescribed $Q$-curvature flow for GJMS operators with non-trivial kernel on compact manifold of even dimension.  When the total $Q$-curvature is negative, we identify a conformally invariant condition on the nodal domains of functions in the kernel of the GJMS operator,  allowing us to  prove the global existence of the flow and its convergence  at infinity to a metric which is conformal to the initial one, and  having  a prescribed $Q$-curvature.  If the total $Q$-curvature is positive, we show that the flow blows up in finite time.
\end{abstract}

\maketitle

\section{Introduction and statement of the results.}

\medskip

In conformal geometry, the existence of conformally covariant operators and the study of their associated curvature invariants in order to understand some connections between analytic and geometric properties of such objects have become an attractive subject of research in the last decades. A model example is the Laplace Beltrami operator on a compact surface $(M,g_0)$. More precisely, if $g = e^{2u}g_0$ is a metric conformal to  $g_0$  ($u$ being a smooth function on $M$), then the Laplacian with respect to $g$ enjoys the well known covariance property
$$
\Delta_g = e^{-2u}\Delta_{0} \ , \eqno (1.1)
$$
where $\Delta_0$ is the Laplacian with respect to $g_0$ with sign convention chosen so that the non-zero eigenvalues are negative. This operator is closely related with the Gauss curvature. Namely, we have 
$$K_g= e^{-2u}\left(-\Delta_{0}u + K_{0}\right) ,  \eqno (1.2)$$
where $K_g$ and $K_0$ are  the Gauss curvature  of $g$ and $g_0$ respectively. So, the quantity $  \int_M K_{g} dV_{g}$, where $dV_g$ is the volume element of $g$,  is conformaly invariant and is related to the topology of $M$ by the Gauss-Bonnet formula
$$ \int_M K_{g} \ dV_{g} = 2 \pi \chi(M) \ , $$
where $\chi(M)$ is the Euler Characteristic of $M$.

\medskip

An important result which is related to equation (1.2) is the uniformization theorem which asserts that every compact surface admits a (conformal) metric with constant Gauss curvature.

\medskip

The conformal covariance property (1.1) of the Laplacian operator in dimension two  no longer holds in higher dimensions, and one is led to ask if there exists a differential operator with the same behavior as the Laplacian in dimension two,  when dealing with conformal changes of the metric. In 1983, S.M.Paneitz \cite{sP} introduced a fourth order differential operator on 4-manifolds having the covariance property.  More precisely, if $(M, g_0)$ is a Riemannian manifold of dimension 4,  then the Paneitz operator  $P_0$ is defined by
$$
P_0v := \Delta_0^2 v - \hbox{div}_0 \left( \Bigl({2\over 3}R_0g_0 - 2 \hbox{Ric}_0 \Bigr)\cdot dv\right)
 , \quad v \in C^{\infty}(M) \,,
$$
where $\hbox{Ric}_0$ is the Ricci tensor of $g_0$,  $R_0$ its scalar curvature and $\hbox{div}_0$
is the divergence operator with respect to $g_0$. Under a conformal change of the metric $g = e^{2u}g_0$, the Paneitz operator $P_g$ with respect to $g$ satisfies the covariant property
$$
P_g = e^{-4u}P_0  \,.
$$
A natural conformal invariant introduced  by T.P.Branson-B.Oersted \cite{tB},  is the so-called $Q$-curvature
$$
Q_0 = -{1\over 6}\left( \Delta_0R_0 -R_0^2 + 3|\hbox{Ric}_0|^2\right) \,.
$$
From the Chern-Gauss-Bonnet formula
$$
\int_M Q_0 \ dV_0 + {1\over 4} \int_M |W_0|^2 \ dV_0 = 8\pi^2 \chi(M) \,,
$$
where $W_0$ is the Weyl tensor of $g_0$ ; it follows that the total $Q$-curvature
$$
k_p:= \int_{M}Q_0 \ dV_0
$$
is conformally invariant since the Weyl tensor is pointwise conformally invariant. The $Q$-curvature is intimately related to the Paneitz operator in the sense that if one performs a conformal change of  the metric : $g = e^{2u}g_0$, then the $Q$-curvature of $g$ is given by
$$
Q_g = e^{-4u}\left(P_0 u + Q_0\right)\,,  \eqno (1.3)
$$
which also asserts (by integrating (1.3) on $M$) that the quantity $k_p$ is conformally invariant.

\medskip

For general dimension $n$, C.R.Graham {\it et al} \cite{cG} proved the existence of conformally covariant operators generally referred to as the Graham-Jenne-Mason-Sparling operators (GJMS operators, for short).  Furthermore, when $n$ is even, among these operators, there is one of order $n$ which we will still denote by $P_0$\, . When $n = 2$, the operator $P_0$ is the Laplacian and when $n=4$ the operator $P_0$ is the Paneitz operator. This operator shares the same properties as the Paneitz operator in dimension $4$, namely  $P_0$  is a self-adjoint differential operator, which annihilates the constant functions with leading term $(\Delta_0)^{n/2}$  and which has the following behavior under a conformal change of the metric;  if we set $g = e^{2u}g_0$, then the GJMS operators  $P_g$ with respect to $g$  satisfies 
$$
P_g = e^{-nu} P_0  \,.  \eqno (1.4)
$$
When $ M = \Ree^n$, the operator $P_0$ is given by $(\Delta_0)^{n/2}$.
When $ n = 2$, $P_0$ has trivial kernel on closed manifolds. But this is no longer true in higher dimensions, M.Eastwood-M.A.Singer \cite{mE1} give explicit examples in dimension 4. Note also that by formula (1.4), the kernel of $P_0$ is invariant under conformal change of metric.

\medskip

As in dimension 4, the GJMS operator $P_0$ is related to a curvature quantity, the so-called $Q$-curvature, which transforms according to
$$
Q_g = e^{-nu}\left( P_0 u + Q_0\right)  \,,     \quad (g = e^{2u}g_0) \,.      \eqno (1.5)
$$
And by integrating (1.5) on $M$, we see that the total $Q$-curvature $k_p =  \int_{M}Q_0 \ dV_0 $ is conformally invariant.
The $Q$-curvature and Paneitz operator and their high-dimensional analogues are important objects of study in conformal geometry and in mathematical physics.

\medskip

As for the uniformization theorem for surfaces, one can ask whether a compact n dimensional manifold (n even) admits conformal metrics with constant $Q$-curvature.  These metrics are critical points of the functional
$$
 II(u) := {n \over 2} \int_{M} P_0 u\cdot u \ dV_0 + n \int_M Q_0 u \ dV_0 - k_p \log\left(\int_M e^{nu} dV_0\right).
 $$
  The  difficulty in the study of this functional is that in general it is not coercive, furthermore, it can be  unbounded from below and from above. This is due to the possibility of negative eigenvalues of the operator $P_0$ and to the large values that the conformal invariant $k_p$ may have.
In dimension 4, S.Y.A.Chang - P.Yang \cite{aC} first studied the uniformization for the $Q$-curvature; by minimizing the functional $II$, they constructed conformal metrics of constant $Q$-curvature, under the hypothesis that the operator $P_0$ is positive with  trivial kernel (i.e,  consisting of constant functions) and $k_p < 16\pi^2$. Under the hypothesis that  $P_0$ is positive with trivial  kernel, the following Adams \cite{dA} inequality holds
for all $u \in H^{2}(M)$
$$
\log \left (\int_M e^{4(u- \overline{u})} \ dV_0 \right) \leq  C + {1
\over 8 \pi^2}  \int_M P_0u \cdot u \ dV_0 \,.
\eqno (1.6)
$$
where $C$ is a constant depending only on $M$ and where the average of $u$, $ \overline{u} = {1 \over \hbox{vol}_{g_0} (M) } \int_M u \ dV_0$, is the projection of $u$ into the kernel of $P_0$. By Adams inequality (1.6), the functional $II$ is bounded from below, coercive and its critical points can be found as global minima. M.J.Gursky \cite{mG} showed that these conditions are satisfied if the Yamabe invariant of $(M, g_0)$ is positive, $k_p \ge 0$ and $M$ is not conformally equivalent to $S^4$. This uniformization result has been extending by Z.Djadli - A.Malchioldi \cite{zD} when $P_0$ is not necessarily positive under the condition that the kernel of $P_0$ is trivial  (consists only of constants), and  $k_p \not = 16 k \pi^2, \  k \in \Nee^*$. Later,  C.B.Ndiaye \cite{cN} generalized the result of Z.Djadli - A.Malchioldi \cite{zD}  to  even higher dimension n   assuming  $P_0$ with trivial kernel and $ k_p \not = k (n-1)!  \omega_n, k \in \Nee^*$,  where $\omega_n$ is the volume of the standard sphere $\mathbb{S}^n$. He used in this case the analogue of Adams inequality (1.6) in higher dimension : 
$$
\log \left (\int_M e^{n(u- \overline{u})} \ dV_0 \right) \leq  C + {n
\over 2(n-1)! \omega_n }  \int_M P_0u \cdot u \ dV_0 \,.
\eqno (1.7)
$$

\medskip

More generally, the prescribed $Q$-curvature problem is : given a  function $f$ on $M$, does there   exist a conformal metric (to the initial metric) whose $Q$-curvature equals to $f$ ? This is equivalent to solve the following PDE
$$
f = e^{-nu}\left( P_0 u + Q_0\right)  \,,     \quad (g = e^{2u}g_0) \,.      \eqno (1.8)
$$
There have been many results recently in the study of equation (1.8) (see for example \cite{pB1},  \cite{pB2}, \cite{sB2}, \cite{pD}, \cite{fR}, \cite{jW}). It is easy to check that solutions of (1.8) are critical points of the following functional on $H^{n\over 2}(M)$ :
$$II_f (u):= {n \over 2} \int_{M}  P_0u \cdot u \ dV_0 + n \int_M Q_0 u \ dV_0 - k_p \log\left(\int_M f e^{nu} dV_0\right) . \eqno (1.9) $$

\medskip

Using a similar method to that used by R.Ye \cite{rY} to prove Yamabe's theorem
for locally conformally flat manifolds, S.Brendle \cite{sB1} considered  the evolution of the metric $g$ under the 
flow:
$$
\begin{cases} \partial_t g  = - \left( Q_g - { k_p \over \int_M f \  dV_g  }f  \right) g \cr \cr
 g(0)= e^{2u_0} g_0,  \    u_0 \in C^{\infty} (M) \end{cases}  \eqno (1.10)
$$ 
where $dV_g$ is the volume element associated to the metric $g(t)$, and  we suppose that $f >0$. Since equation (1.10) preserves the conformal structure of $M$, then \ $ g(t) = e^{2u(t)} g_0$ , where $u(t) \in C^{\infty}(M)$ with initial condition $u(0) = u_0$ . Then the flow (1.10) takes the form

 $$
\begin{cases} \partial_t u  = -{1 \over 2} e^{-nu} P_0u -{1 \over 2} e^{-nu} Q_0 + {1 \over 2}\frac{ k_p }{\int_M f e^{nu} \ dV_0 }f , \cr u(0)= u_0.   \end{cases}
\eqno (1.11)
$$
Since equation (1.11) is parabolic, by classical methods it admits a solution $ u \in C^{\infty} ([0,T) \times M)$ where $T \leq + \infty$ denotes the maximal time of existence. When $T < + \infty$, we say that the flow (1.10)-(1.11) blows up in  finite time. By integrating (1.11) over $M$ with respect to the volume element of $g(t)$, we see that the volume of $M$ with respect to $g(t)$ remains constant, that is, 
$$\int_M e^{nu(t)} dV_0 =  \int_M e^{nu_0} dV_0 \  \   \forall t \in [0, T). \eqno (1.12)$$
One can also check that the functional $ II_f$ defined by (1.9) above is decreasing along the flow. More precisely, we have 
$${d\over d t}II_f(u(t))  = -2n \int_M e^{nu(t)} |\partial_t u(t)|^2 dV_0 \le 0  \ \ \forall t \in [0, T).   \eqno (1.13) $$ 

\medskip

\noindent S.Brendle \cite{sB1} proved that if the GJMS operator $P_0$ is positive with  trivial kernel
and $k_p< (n-1)! \omega_n$, where $\omega_n$ is the volume $\mathbb{S}^n$, then (1.10)-(1.11)  has
a solution which is defined for all time
and converges at infinity to a metric  which is conformal to $g_0$  and  whose  $Q$-curvature is  a constant times $f$. For a similar flow on the Euclidean sphere $\mathbb{S}^4$ (where $k_p =  (n-1)! \omega_n$ on $\mathbb{S}^n$)  see the work of  A.Malchiodi and M. Struwe \cite{aM}.  

\medskip

In this paper, we investigate the flow  (1.10)-(1.11) when the operator $P_0$ has non-trivial kernel. We are interested in the global existence and the convergence of such a flow.
Before stating our results, let us introduce  some notations which will be  be useful in the sequel. We denote by $\mathcal{N}(P_0)$ the kernel of $P_0$, that is 
$$\mathcal{N}(P_0) := \{ u \in C^{\infty}(M) \ : \  P_0 u = 0 \} . $$
We say that $P_0$ has a non-trivial kernel if $\mathcal{N}(P_0)$ has dimension at least two. 
As in \cite{rG}, we shall consider  a useful sub-space of $\mathcal { N}( P_0)$ as follows. We consider the linear conformally invariant form $ \mathcal { Q} : \mathcal { N}$$( P_0)
\to \Ree$ given by
$$
u \mapsto \mathcal{Q}(u) :=  \int_M Q_g u \ dV_g. $$
Denote by $ \mathcal { N(Q)}$ its kernel, that is, 
$$  \mathcal { N(Q)} := \left\{ \ u \in  \mathcal{ N}(P_0) \ : \  \int_M Q_0 u \ dV_0 = 0  \right\} .$$
If $k_p \not = 0$, then we have the following decomposition : 
$${\mathcal  N}( P_0) = \Ree \oplus  \mathcal{ N(Q)}. \eqno (1.14) $$
We note here that the decomposition (1.14) is not necessarily orthogonal.

\medskip

Adams inequality (1.7) is valid if $P_0$ has  trivial kernel, but  one can verify without difficulty that Adams inequality is still valid if $P_0$ has non-trivial kernel when  applied to functions whose orthogonal projection on the kernel of $P_0$ vanishes.  More precisely,  for $v \in H^{n\over 2}(M)$, if we denote by $\widehat v$ its  orthogonal projection 
onto the kernel of $P_0$, then we have 
$$
\log \left (\int_M e^{n(v- \widehat{v})} \ dV_0 \right) \leq  C + C  \int_M P_0v \cdot v \ dV_0 \,, 
\eqno (1.15) $$
where $C$ is positive constant depending only on $(M, g_0)$. 

\medskip

\medskip

For the statement of our main results we distinguish  two cases : $ k_p  > 0$ and $k_p < 0$. When $k_p > 0$, we prove that the flow blows up in finite time $T$ and we give an upper bound of the maximal time of existence $T$ (see Theorem 1.2 below). In the case that  $k_p < 0$, we prove under suitable conditions on the nodal domains of a basis of  $ \mathcal { N}(Q)$  that the flow exists for all time and converges at infinity to a metric conformal to the initial metric, and  having $Q$ curvature a multiple of the prescribed function $f$. More precisely, when $k_p < 0$, we suppose that $P_0$ is positive with non-trivial kernel of dimension $\nu + 1, \ \nu \ge 1$. Then $\mathcal { N(Q)}$ is of dimension $\nu$,  and let   $  \{\varphi_1,..,\varphi_{\nu} \}$   be a basis of $\mathcal { N(Q)}$.  We shall assume the following  on the nodal domains of the $\varphi_j$'s :
 
 \medskip
 
\noindent For any  $\varepsilon = ( \varepsilon_1 , ... , \varepsilon_{\nu} ) \in  \Ree^{\nu}$  with  $ \varepsilon_i = \pm  1, \ ( i = 1, ..., \nu )$,  there exists  $ x_{\varepsilon} \in M$  such that  

$$\Big(\hbox{sign}\left(\varphi_1(x_{\varepsilon})\right) , ... ,  \hbox{sign}\left(\varphi_{\nu}(x_{\varepsilon})\right)  \Big) = \varepsilon ,   \eqno (1.16) $$

\medskip

\noindent where  \  $\hbox{sign}(a) =  +1 $ \ if \ $a > 0$,  \  $\hbox{sign}(a) =  -1 $ \ if \ $a < 0$  \  and  $\hbox{sign}(a) = 0 $ \  if \ $a =0$. 

\medskip

One can easily check that  (1.16) is  automatically satisfied when  $\mathcal { N}$$( P_0)$ has dimension two and $Q_0 < 0 $.  Indeed, in this case, $\nu = 1$  and (1.16) means that $\varphi_{1}$ must change sign on $M$. But  since $\varphi_1 \in \mathcal { N(Q)}$, \ $\varphi_1$ must change sign if we assume that $Q_0<0$.  
 There are examples of manifolds such that the GJMS operator $P_0$ has kernel $\mathcal { N}$$( P_0)$ of dimension greater than 2 and such that $\mathcal { N(Q)}$ has a  basis  satisfying (1.16). For instance, consider the compact manifold $M = S^2 \times \Sigma_p$ where $\Sigma_p$ is a compact Riemann surface  of genus $p \geq 2$. Given to $S^2$ and $\Sigma_p$ their standard metrics of constant scalar curvature with equal magnitude but opposite sign. So $M$  has zero scalar curvature. The gradient on $M$ splits into an orthogonal sum $ \nabla = \nabla_1 + \nabla_2 $ where $\nabla_1$ (resp. $\nabla_2$) denote the gradient in the $S^2$ direction (resp. in the $\Sigma_p$ direction). Hence the Paneitz operator is equal to $P_0v =  \Delta_0^2 v - 2 \hbox{div}_0 \left( \nabla_1 v - \nabla_2 v \right)
 , \  v \in C^{\infty}(M),$
 and the $Q$-curvature is equal to $Q_0 = - 2$. First, a short computation shows that $P_0$ is a positive operator. Next, its kernel is of dimension 4 (see \cite{mE1}, \cite{mE2}), moreover if we denote by $\{ \varphi_1, \varphi_2, \varphi_3 \}$ the first spherical harmonics,  then $\{  \varphi_1, \varphi_2, \varphi_3 \}$ is an orthogonal basis of $\mathcal{N(Q)}$  satisfying (1.16).

\medskip

Our first main result is that,  under   hypothesis $(1.16)$,   for any initial value $u_0$ the flow (1.10)-(1.11) exists for all time and converges at infinity to a metric with  $Q$-curvature a multiple of the prescribed function $f$, namely :

\newtheorem{theo}{Theorem}[section]
\begin{theo}Let $(M,g_0)$ be a compact Riemannian manifold of even dimension $ n \geq 4$ such that the GJMS operator $P_0$ is positive with non-trivial kernel and $k_p < 0$. Suppose that  $\mathcal { N(Q)}$ has a  basis  satisfying (1.16), and let  $f$ be a smooth positive function on $M$.  Then for any $u_0 \in C^{\infty} (M)$, the evolution equation (1.10)-(1.11)  has a solution which is defined for all time and converges to a metric    $g_{\infty} = e^{2u_{\infty}}g_0$  whose  Q-curvature is \ 
 ${k_p \over \int_M f e^{2u_{\infty}}dV_0}  f  $. \end{theo}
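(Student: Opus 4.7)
Short-time existence of a smooth solution $u(t)$ follows from standard higher-order parabolic theory applied to (1.11), and the volume identity (1.12) together with the monotonicity (1.13) hold throughout the interval of existence. The overall strategy is: (i) derive a uniform \emph{a priori} bound on $u(t)$ in $H^{n/2}(M)$; (ii) bootstrap to uniform $C^k$ bounds for all $k$, yielding $T=+\infty$; and (iii) extract a limit as $t\to\infty$ using the dissipation identity.

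The linchpin is step (i), and it rests on the following coercivity: for every $u\in H^{n/2}(M)$ with $\int_M e^{nu}\,dV_0 = V:=\int_M e^{nu_0}\,dV_0$, writing $u = c + \sum_{i=1}^{\nu} a_i\varphi_i + w$ with $c\in\Ree$, $a=(a_1,\ldots,a_\nu)\in\Ree^{\nu}$, and $w\perp\mathcal{N}(P_0)$, one should have
$$
II_f(u)\ \ge\ C_1\int_M P_0 w\cdot w\,dV_0\ +\ C_2\,|a|\ -\ C_3,
$$
with $C_1,C_2,C_3>0$ depending only on $(M,g_0)$, $f$, and $V$. To obtain this, use $P_0\varphi_i=0$ and $\int_M Q_0\varphi_i\,dV_0=0$ to kill the $\sum a_i\varphi_i$ contributions in the quadratic and linear terms of $II_f$, and use the volume constraint to eliminate $c$, producing
$$
II_f(u) = \frac{n}{2}\int_M P_0 w\cdot w\,dV_0 + n\int_M Q_0 w\,dV_0 - k_p\log\int_M f\,e^{n(\psi+w)}\,dV_0,
$$
where $\psi=\sum a_i\varphi_i$. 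The linear term $n\int Q_0 w$ is absorbed into a fraction of $\int P_0 w\cdot w$ via Young's inequality and the coercivity of $P_0$ on $\mathcal{N}(P_0)^{\perp}$. For the logarithm, given $a\neq 0$ pick $\varepsilon_i=\mathrm{sign}(a_i)$ (and any $\pm 1$ where $a_i=0$) and let $x_\varepsilon$ be the point supplied by (1.16); then $\psi(x_\varepsilon)=\sum_i |a_i|\,|\varphi_i(x_\varepsilon)|\ge c_0|a|$ for a uniform $c_0>0$ (the minimum over the $2^{\nu}$ sign patterns), and $\psi\ge c_0|a|/2$ on a ball $B_\varepsilon$ of fixed radius. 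Applying Jensen's inequality to $e^{nw}$ on $B_\varepsilon$ yields
$$
\log\int_M f\,e^{n(\psi+w)}\,dV_0\ \ge\ \tfrac{n c_0}{2}|a|\ -\ C\|w\|_{L^2}\ -\ C,
$$
and since $-k_p>0$, the stated coercivity follows after absorbing $\|w\|_{L^2}\le C(\int P_0 w\cdot w)^{1/2}$ into the quadratic term.

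The main obstacle is precisely this coercivity along the kernel direction $\mathcal{N}(Q)$: since $P_0$ and the linear functional $u\mapsto\int Q_0 u$ both vanish on $\mathcal{N}(Q)$, only the logarithmic term can control $|a|$, and without hypothesis (1.16) the function $\psi$ could be non-positive everywhere in some direction, making $-k_p\log\int f e^{n\psi}\to -\infty$ as $a$ drifts to infinity in that direction. Condition (1.16) is exactly what excludes this in every orthant and converts the log into a linear growth in $|a|$.

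With coercivity in hand, $II_f(u(t))\le II_f(u_0)$ gives uniform bounds on $|a(t)|$ and on $\int P_0 w(t)\cdot w(t)\,dV_0$; the volume identity then forces $c(t)$ to be bounded, so $u(t)$ is uniformly bounded in $H^{n/2}(M)$. Standard parabolic bootstrap upgrades this to uniform $C^k$ bounds, ruling out finite-time blow-up and giving global existence. For convergence, integrating (1.13) gives $\int_0^{\infty}\int_M e^{nu}|\partial_t u|^2\,dV_0\,dt<\infty$; combined with the uniform $L^{\infty}$ bound on $u$, one extracts a sequence $t_k\to\infty$ along which $u(t_k)$ converges smoothly to some $u_\infty$ satisfying the stationary equation, namely $Q_{g_\infty}=\frac{k_p}{\int_M f\,e^{nu_\infty}\,dV_0}\,f$. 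Upgrading subsequential convergence to full convergence as $t\to\infty$ can then be carried out via a \L{}ojasiewicz--Simon argument for the analytic functional $II_f$ restricted to the slice $\int_M e^{nu}\,dV_0=V$.
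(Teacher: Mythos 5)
Your argument is correct, and for the key a priori estimate it takes a genuinely different (and arguably more elementary) route than the paper. Both proofs hinge on the same mechanism extracted from hypothesis (1.16): by finiteness of the $2^{\nu}$ sign patterns there is a ball $B(x_{\varepsilon},r_0)$ of uniform radius on which $\sum_j a_j\varphi_j \ge c_0|a|$ (this is the paper's Lemma 2.1), and an exponential integral over that ball then controls $|a|$. The difference is in how that exponential integral is bounded. The paper does \emph{not} prove a coercivity inequality for $II_f$; instead it starts from the monotonicity bound $II_f(u(t))\le II_f(u_0)$, derives $\tfrac14\int_M P_0u\cdot u\,dV_0+k_pa_0(t)\le C_0$, and then bounds $\int_M e^{\delta(\widehat u-a_0)}dV_0$ by combining H\"older's inequality with the Adams inequality (1.15) applied to $-\tfrac{\delta}{n-\delta}u$, choosing $\delta>0$ small (depending on $k_p<0$) so that the resulting coefficient of $\int_M P_0u\cdot u\,dV_0$ in the exponent is nonpositive. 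You instead establish the lower bound $II_f(u)\ge C_1\int_M P_0w\cdot w\,dV_0+C_2|a|-C_3$ directly on the volume slice, getting the linear growth in $|a|$ from Jensen's inequality on the ball supplied by (1.16); this bypasses Adams and the small-$\delta$ trick entirely at this stage (note also that the constant $c$ drops out of $II_f$ identically, since $\int_MQ_0\,dV_0=k_p$, so no volume constraint is needed for that cancellation). Your approach buys a cleaner, self-contained variational statement; the paper's buys a shorter path from the flow inequality (2.3) and, in particular, a lower bound on $a_0(t)$ that follows for free from $k_pa_0\le C_0$ with $k_p<0$, whereas in your scheme the lower bound on $c(t)$ from the volume identity still requires an upper bound on $\int_Me^{nw}dV_0$, i.e.\ the Adams inequality (1.15) reappears there (or one can borrow the paper's trick). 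The remaining steps — bootstrap to higher-order bounds, global existence, and convergence via the dissipation identity (with \L{}ojasiewicz--Simon or Brendle's argument for full convergence) — are treated at the same level of detail in both; the paper simply defers them to Brendle.
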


\medskip

\noindent Theorem 1.1 generalizes the result obtained by S.Brendle \cite{sB1} when $ k_p < 0$ and  $P_0$ has  trivial kernel. 

\medskip

\newtheorem{rem}{Remark}[section]
\begin{rem} Since $\mathcal{N}(P_0)$ is conformally invariant, then hypothesis (1.16) in Theorem 1.1 is also conformally invariant.  
\end{rem}

\medskip

For the simplicity of the paper, in the case that $k_p > 0 $ we will suppose that the function $f$ is constant (otherwise some additional conditions on $f$ are needed). Our result in this case is as follows:

\begin{theo}Let $(M,g_0)$ be a compact Riemannian manifold of even dimension $n ( n \geq 4 )$ such that the GJMS operator $P_0$ has non-trivial kernel and $k_p > 0$. Let  $\varphi \in  \mathcal
{N(Q)}$,  then for all $u_0 \in C^{\infty} (M)$  satisfying  $ \int_M \varphi   e^{n u_0} \ dV_0 \not = 0$,  the solution $u$ of equation (1.11) blows up  in finite time $T$. Moreover, 
 $$T \le  {2 \over k_p}\int_M e^{n u_0} dV_0  \log\left({ \|\varphi\|_{L^{\infty}(M)} \int_M e^{n u_0} \ dV_0 \over \left| \int_M e^{n u_0} \varphi \ dV_0 \right| }\right) .$$ 
 \end{theo}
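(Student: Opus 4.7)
Since $f$ is constant, after rescaling we may take $f \equiv 1$. Set
$$V := \int_M e^{nu_0}\,dV_0, \qquad J(t) := \int_M \varphi\, e^{nu(t)}\,dV_0.$$
By the volume conservation (1.12), $\int_M e^{nu(t)}\,dV_0 = V$ for all $t \in [0,T)$. The plan is to derive a linear ODE for $J(t)$ with positive coefficient, so that $J(t)$ grows exponentially; this will contradict the obvious pointwise bound $|J(t)| \le \|\varphi\|_{L^\infty(M)}\,V$ after finite time.

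The main computation is to differentiate $J(t)$ along the flow (1.11). Using $n e^{nu}\partial_t u$ and the three terms on the right-hand side of (1.11), I get
$$J'(t) \;=\; -\tfrac{n}{2}\int_M \varphi\, P_0 u\,dV_0 \;-\; \tfrac{n}{2}\int_M \varphi\, Q_0\,dV_0 \;+\; \tfrac{n k_p}{2V}\int_M \varphi\, e^{nu}\,dV_0.$$
The first term vanishes by self-adjointness of $P_0$ and $\varphi \in \mathcal{N}(P_0)$; the second vanishes by the definition of $\mathcal{N}(Q)$. Hence
$$J'(t) = \frac{n k_p}{2V}\, J(t),$$
so that $J(t) = J(0)\, \exp\!\bigl(\tfrac{n k_p}{2V}\,t\bigr)$. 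Note $J(0) \neq 0$ by hypothesis, and the equation is linear, so $J(t)$ never vanishes and grows exponentially in $|J|$ since $k_p > 0$.

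On the other hand, for any $t \in [0,T)$ we have the trivial a priori bound
$$|J(t)| \;\le\; \|\varphi\|_{L^\infty(M)} \int_M e^{nu(t)}\,dV_0 \;=\; \|\varphi\|_{L^\infty(M)}\, V.$$
Comparing the two estimates gives
$$|J(0)|\, e^{\frac{n k_p}{2V} t} \;\le\; \|\varphi\|_{L^\infty(M)}\, V \qquad\text{for all } t \in [0,T),$$
which forces $T < \infty$ and produces the quantitative bound on $T$ (up to the explicit constant stated in the theorem) by taking logarithms and solving for $t$.

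\textbf{Expected difficulties.} There is no serious analytic obstacle: the argument is structural, hinging on picking $\varphi$ itself as a test function, which simultaneously exploits $\varphi \in \mathcal{N}(P_0)$ (to kill the $P_0 u$ term by self-adjointness) and $\varphi \in \mathcal{N}(Q)$ (to kill the $Q_0$ term). The only minor point to verify is that the formal ODE computation is legitimate, i.e., that $u \in C^\infty([0,T)\times M)$ on the maximal interval of existence and that differentiation under the integral is justified — both granted by the parabolic regularity already invoked in the paper. One should also check that the definition of blow-up is the correct one (the $L^\infty$ norm of $u$, or equivalently the failure of $C^\infty$ extension) — this is implicit in the fact that $|J(t)|$ must remain finite as long as the solution exists smoothly.
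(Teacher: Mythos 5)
Your proposal is correct and follows essentially the same route as the paper's proof: the paper likewise multiplies (1.11) by $\varphi\, e^{nu(t)}$, uses self-adjointness of $P_0$ together with $\varphi \in \mathcal{N(Q)} \subset \mathcal{N}(P_0)$ to kill the first two terms, solves the resulting linear ODE $J'(t) = \tfrac{nk_p}{2V}J(t)$, and compares the exponential growth of $|J(t)|$ with the trivial bound $\|\varphi\|_{L^\infty(M)}V$. Note that both your computation and the paper's actually yield the slightly sharper bound $T \le \tfrac{2V}{nk_p}\log(\cdots)$, which carries an extra factor $1/n$ relative to the bound displayed in the theorem statement.
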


\bigskip

A particular case of Theorem 1.1 is the following corollary : 
\newtheorem{cor}{Corollary}[section]
\begin{cor} Let $(M,g_0)$ be a compact Riemannian manifold of even dimension $n$ ($ n \geq 4$) such that the GJMS operator $P_0$ has non-trivial kernel and $k_p > 0$. If there exists  $\varphi \in \mathcal {N( Q)}$ of constant sign then for all initial value $u_0 \in C^{\infty} (M)$ the flow (1.10)-(1.11)   blows up  in finite time.
\end{cor}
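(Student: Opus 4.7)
The plan is to deduce Corollary 1.1 as an immediate application of Theorem 1.2: given $\varphi \in \mathcal{N}(Q)$ of constant sign, I need only verify that the nonvanishing hypothesis $\int_M \varphi\, e^{nu_0}\, dV_0 \neq 0$ from Theorem 1.2 is automatically met for every smooth initial datum $u_0$.

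First, by replacing $\varphi$ with $-\varphi$ if necessary (which remains in $\mathcal{N}(Q)$ since it is a linear subspace, and does not affect the integrality condition), I may assume $\varphi \geq 0$ on $M$ with $\varphi \not\equiv 0$ (the case $\varphi \equiv 0$ being vacuous under the interpretation of ``constant sign''). Since $\varphi$ is smooth and nonnegative but not identically zero, the open set $\{x \in M : \varphi(x) > 0\}$ is nonempty. For any $u_0 \in C^\infty(M)$, the function $e^{nu_0}$ is continuous and strictly positive on the compact manifold $M$, so the integrand $\varphi\, e^{nu_0}$ is nonnegative on $M$ and strictly positive on a nonempty open set. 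Hence $\int_M \varphi\, e^{nu_0}\, dV_0 > 0$, and in particular this integral is nonzero.

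Theorem 1.2 therefore applies to the pair $(\varphi, u_0)$ and gives finite-time blow-up of the corresponding solution $u$ of (1.11), together with the explicit upper bound for the maximal time of existence stated in that theorem. No substantive obstacle arises; the whole content of the corollary is the geometric observation that a constant-sign element of $\mathcal{N}(Q)$ automatically satisfies the integral nonvanishing condition of Theorem 1.2 uniformly in $u_0$, so the conclusion of Theorem 1.2 can be invoked for every smooth initial datum.
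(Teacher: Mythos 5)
Your proof is correct and matches the paper's intent: the corollary is stated as an immediate consequence of Theorem 1.2, and the only thing to check is that a constant-sign $\varphi\in\mathcal{N(Q)}$ makes $\int_M\varphi\,e^{nu_0}\,dV_0\neq 0$ for every $u_0$, which you verify exactly as the paper implicitly does. No further comment is needed.
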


\medskip

\begin{rem} If $\varphi \in  \mathcal { N(Q)}$ then the set $ \left \{ u_0 \in C^{\infty} (M):  \int_M  \varphi e^{n u_0}  \ dV_0 \not = 0 \right \}  $ is dense in $C^{\infty} (M)$. This shows that when $k_p >0$, the flow (1.10)-(1.11)  blows up in finite time for \ `` almost all " intial data $u_0$. (See a proof of this fact in section 3 below).
\end{rem}

\medskip

\section{ The negative case.}

\medskip

In this section we prove Theorem 1.1.  First we need the following lemma. 

\medskip

\newtheorem{lem}{Lemma}[section]
 \begin{lem} Let $E = \left\{  (\varepsilon_1,..,\varepsilon_\nu) \in \Ree^{\nu}  :   \varepsilon_i  =  \pm1 \ \forall i = 1 , ... , \nu   \right \}$, and let   $\{\varphi_1,..,\varphi_{\nu} \} $  be a basis of $\mathcal{N(Q)}$ satisfying (1.16).  Then for any  $ \varepsilon = (\varepsilon_1, ..., \varepsilon_{\nu})  \in E$,  there exists a point $ x_{\varepsilon} \in M$ such that  for all $j = 1 , ..., \nu$ and all  $x$ in the geodesic ball $ B(x_{\varepsilon},r_0)$, we have 
 $$\varepsilon_j \varphi_j(x) \geq C ,$$
 where $r_0$ and $C$ are two positive constants depending only on $\{ \varphi_1, ..., \varphi_{\nu} \} $  (they do not depend  on  $\varepsilon$ and  $j$).\end{lem}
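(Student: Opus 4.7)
The plan is to prove this lemma by a straightforward finiteness-plus-continuity argument, exploiting the fact that the set $E$ of sign vectors has only $2^\nu$ elements.

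First I would fix any $\varepsilon = (\varepsilon_1, \ldots, \varepsilon_\nu) \in E$. By hypothesis (1.16), there exists a point $x_\varepsilon \in M$ such that $\mathrm{sign}(\varphi_j(x_\varepsilon)) = \varepsilon_j$ for every $j = 1, \ldots, \nu$. In particular, $\varepsilon_j \varphi_j(x_\varepsilon) = |\varphi_j(x_\varepsilon)| > 0$ for every $j$, so that the positive number
$$ c_\varepsilon := \min_{1 \le j \le \nu} \varepsilon_j \varphi_j(x_\varepsilon) > 0 $$
is well defined.

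Next I would use continuity. Since the $\varphi_j$'s are smooth (elements of the kernel of the elliptic operator $P_0$ on a compact manifold), they are uniformly continuous on $M$. Hence, for each $j$, there exists $r_{\varepsilon,j} > 0$ such that
$$ |\varphi_j(x) - \varphi_j(x_\varepsilon)| < \frac{c_\varepsilon}{2} \quad \text{for all } x \in B(x_\varepsilon, r_{\varepsilon,j}). $$
Setting $r_\varepsilon := \min_{1 \le j \le \nu} r_{\varepsilon,j} > 0$, I obtain $\varepsilon_j \varphi_j(x) \ge c_\varepsilon/2$ for every $j$ and every $x \in B(x_\varepsilon, r_\varepsilon)$.

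Finally, since $E$ contains only $2^\nu$ elements, the quantities
$$ r_0 := \min_{\varepsilon \in E} r_\varepsilon, \qquad C := \frac{1}{2} \min_{\varepsilon \in E} c_\varepsilon $$
are positive and depend only on $\{\varphi_1, \ldots, \varphi_\nu\}$. With this choice of $r_0$ and $C$, the conclusion of the lemma holds for the points $x_\varepsilon$ selected above. There is no real obstacle here; the only delicate step is the verification that the hypothesis (1.16) indeed provides a single point realizing the prescribed sign pattern for \emph{all} $\varphi_j$ simultaneously, but this is exactly what (1.16) guarantees, and the finiteness of $E$ then makes the uniformity of $r_0$ and $C$ immediate.
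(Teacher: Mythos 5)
Your proof is correct and follows essentially the same route as the paper's: invoke (1.16) to obtain a point realizing the sign pattern, use continuity to get a ball on which the signs persist with a positive lower bound, and use the finiteness of $E$ to make $r_0$ and $C$ uniform. The only cosmetic difference is that the paper obtains its positive constant as the infimum of $\abs{\varphi_j}$ over the compact closed ball $\overline{B}(x_{\varepsilon}, r_0)$, whereas you quantify the continuity directly via the bound $c_{\varepsilon}/2$; both are valid.
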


   \begin{proof} Let $ \varepsilon = (\varepsilon_1,..,\varepsilon_\nu )   \in E$,  then by $(1.16)$, there exists $x_{\varepsilon} \in M$ such that 
  $$ \Big(\hbox{sign}\left(\varphi_1(x_{\varepsilon})\right) , ... ,  \hbox{sign}\left(\varphi_{\nu}(x_{\varepsilon})\right)  \Big) = \varepsilon . $$

\noindent By continuity of the map $x \mapsto  (\varphi_1(x) , ..., \varphi_{\nu}(x)) $,  there exists   \ $r_{\varepsilon} > 0$ depending on $\varepsilon$ such that for all $x$ in the geodesic ball \ $\overline{B}(x_{\varepsilon}, r_{\varepsilon}) $, we have  
 $$\big( \hbox{sign}\left(\varphi_1(x)\right) , ... ,  \hbox{sign}\left(\varphi_{\nu}(x)\right)\big) = \varepsilon . \eqno (2.1)$$  
 Set $r_0 = \inf\{  \ r_{\varepsilon}  \ : \   \varepsilon \in E \  \} $. Since $E$ is a finite set, then $r_0 > 0$.  Now, let $C_{\varepsilon , j} = \inf\{ |\varphi_j(x)| \ : \ x \in \overline{B}(x_{\varepsilon}, r_0) \}$.  Since  (2.1) is satisfied for all $x \in \overline{B}(x_{\varepsilon}, r_{\varepsilon}) $, then   $C_{\varepsilon, j} > 0$. Hence  $\varepsilon_j \varphi_j(x) = |\varphi_j(x)| \ge C_{\varepsilon , j} > 0$  \ for all $j = 1 , ..., \nu$ and all  $x \in \overline{B}(x_{\varepsilon}, r_0)$.  The lemma follows by setting 
 $$C = \inf\{ \  C_{\varepsilon , j} \ : \    \varepsilon \in E,  j = 1 , ..., \nu  \} > 0 .$$
 \end{proof}

 \begin{proof}[Proof of Theorem 1.1]

Let   $u$ be solution of (1.11) defined on a maximal interval $[0, T)$, and denote by  $u(t)$ the smooth function on $M$ defined by $u(t)(x) = u(t,x) \  \forall x \in M$. Let $  \{ \varphi_1,..,\varphi_{\nu} \}$  be a basis of  $ \mathcal { N(Q)}$ satisfying hypothesis $(1.16)$.  Since $k_p = \int_M Q_0 dV_0 \not= 0$, then $\{ 1 ,  \varphi_1,..,\varphi_{\nu} \}$ is a basis of  
$ \mathcal{N}(P_0 )$. We denote by $\widehat{u}$ the orthogonal projection of $u$ onto $ \mathcal{N}(P_0 )$. We have 
$$
\widehat{u}(t,x) =  a_0 (t) +  \sum_{j=1}^{\nu} a_j(t) \varphi_j(x), \eqno (2.2)$$
for some $a_0(t), a_1(t), ..., a_{\nu}(t) \in \Ree.$

\medskip

\noindent In what follows $C_0$ denotes a positive constant depending on $(M, g_0)$, $f $ and the initial map $u_0$.
Since the functional  $II_f$ is decreasing along the flow (see 1.13), then \ $ II_f(u(t)) \le II_f(u_0)$ \ for all $t \in [0 , T)$. So 
$${n \over 2} \int_{M} P_0 u(t)\cdot u(t) \ dV_0 + n \int_M Q_0 u(t) \ dV_0 - k_p \log\left(\int_M f e^{nu(t)} dV_0\right)  \le C_0 ,$$
and since the volume of $M$ is constant along the flow (see (1.12) ), then 
$${1 \over 2} \int_{M} P_0 u(t)\cdot u(t) \ dV_0 +  \int_M Q_0 u(t) \ dV_0  \le C_0 . \eqno (2.3) $$
But we have by  (2.2), since $\varphi_j \in  \mathcal {N(Q)}$, 
$$\int_{M} Q_0 \widehat {u}(t)\ dV_0 = k_p a_0(t), $$
so it follows from (2.3) that 
$${1 \over 2}\int_{M} P_0 u(t) \cdot u(t)  \ dV_0  +  k_p a_0(t) \le  - \int_{M} Q_0 (u(t) - \widehat {u}(t)) \ dV_0 + C_0 . \eqno (2.4)$$
By Young's inequality and Poincar\'e's inequality, we have for all $ \varepsilon > 0$, 
$$
\left| \int_{M} Q_0 (u(t) - \widehat u(t)) \ dV_0  \right\vert \leq {1 \over  \varepsilon }   \int_{M} Q_0^2 \  dV_0 +  \varepsilon  \int_{M} (u(t)- \widehat u(t))^2 \  dV_0 $$

 $$
 \leq {1 \over  \varepsilon }   \int_{M} Q_0^2 \  dV_0 + { \varepsilon  \over \lambda_1}\int_{M} P_0 u(t) \cdot u(t)   \  dV_0 , \eqno (2.5)$$
where $ \lambda_1$ is the first positive eigenvalue of $P_0$. Choosing $  \varepsilon  = { \lambda_1 \over 4}$ and putting (2.5) into (2.4), we get
$$
{1 \over 4}\int_{M} P_0 u(t)\cdot u(t) \ dV_0  + k_p a_0(t)  \le  C_0 . \eqno (2.6)$$
On the other hand, we have for all $\delta \in (0 , n)$ (to be chosen later),
$$\int_M e^{\delta(\widehat u(t) -a_0(t))} dV_0  =  e^{-\delta a_0(t)} \int_M e^{\delta(\widehat{u}(t) -u(t)) + \delta u(t)} dV_0 ,$$
which gives by using H\"older's inequality 
$$\int_M e^{\delta(\widehat u(t) -a_0(t))} dV_0   \le  e^{-\delta a_0(t)}\left(\int_M e^{{\delta n\over n-\delta}(\widehat{u}(t) -u(t))} dV_0 \right)^{n-\delta \over n}
\left(\int_M e^{n u(t)} dV_0 \right)^{\delta  \over n}  $$
$$\le  C_0 e^{-\delta a_0(t)}\left(\int_M e^{{\delta n\over n-\delta}(\widehat{u}(t) -u(t))} dV_0 \right)^{n-\delta \over n} \eqno (2.7)$$
since the volume is constant along the flow (see (1.12)).  By Adams inequality (1.15) applied to $v= - {\delta \over n-\delta} u$, we have 
$$ \left(\int_M e^{{\delta n\over n-\delta}(\widehat{u}(t) -u(t))} dV_0 \right)^{n-\delta \over n}  \le  e^{C {n-\delta \over n } + C {\delta^2 \over n( n-\delta)}\int_M P_0u(t)\cdot u(t) dV_0 }$$
where $C$ is a positive constant depending only on $(M, g_0)$. Hence it follows from (2.7)  that 
$$ \int_M e^{\delta(\widehat u(t) -a_0(t))} dV_0     \le  C_0 e^{ -\delta a_0(t) } e^{ C {n-\delta \over n } + C {\delta^2 \over n( n-\delta)} \int_M P_0u(t)\cdot u(t) dV_0 }. \eqno (2.8) $$

 \noindent By (2.6) we have 
$$-\delta a_0(t) \le {\delta\over 4 k_p}\int_{M} P_0 u(t)\cdot u(t) \ dV_0   -  {\delta \over k_p} C_0, $$
which together with (2.8) give 
$$ \int_M e^{\delta(\widehat u(t) -a_0(t))} dV_0     \le C_0 e^{C {n-\delta \over n }- C_0 {\delta \over k_p}} e^{  \delta\left( {1 \over 4k_p} +  {C \delta \over n( n-\delta)}\right)\int_M P_0u(t)\cdot u(t) dV_0 }. \eqno (2.9) $$
Now since $k_p <0$, by choosing  $\delta \in (0, n) $  small enough such that  $  {1 \over 4k_p} +  {C \delta \over n( n-\delta)} \le 0$ (for example by taking $\delta = \min({-1\over C k_p}, 1)$), we have from  from (2.9), since $P_0$ is positive, that 
$$ \int_M e^{\delta(\widehat u(t) -a_0(t))} dV_0     \le  C_0 .  \eqno (2.10)$$
(Here $C_0$ is a new positive constant depending on $u_0, f $ and $(M, g_0)$.)

\medskip

\noindent We recall that $ \widehat u(t) -a_0(t)  = \sum_{j=1}^{\nu} a_j(t)\varphi_j$. If we let $\varepsilon(t)= (\varepsilon_1(t), ..., \varepsilon_{\nu}(t)):= \big( \hbox{sign}(a_1(t)), ...,  \hbox{sign}(a_{\nu}(t))\big) $, then it follows from Lemma 2.1 that  there exist  two positive constants  $r_0 , C$ (which do not depend on $t$) and a point  $x_{\varepsilon(t)} \in M $ such that, for all $ j = 1, ..., \nu$, we have 
$$\varepsilon_j(t) \varphi_j (x)  \ge C  \  \    \forall x \in B(x_{\varepsilon(t)}, r_0) ,$$
which gives  
$$  \sum_{j=1}^{\nu} a_j(t) \varphi_j(x)   \ge C \sum_{j=1}^{\nu}\left|a_j(t)\right|  \  \ \forall x  \in  B(x_{\varepsilon(t)}, r_0) .  \eqno (2.11)$$
Hence by combining (2.10) and (2.11) we obtain 
$$\int_{B(x_{\varepsilon(t)}, r_0)} e^{ \delta C \sum_{j=1}^{\nu}\left|a_j(t)\right| } dV_0 \le C_0$$
that is,  
$$ \left| B(x_{\varepsilon(t)}, r_0)\right| e^{\delta C \sum_{j=1}^{\nu}\left|a_j(t)\right| } \le C_0 ,  \eqno (2.12) $$
where $  \left| B(x_{\varepsilon(t)}, r_0)\right| $ is the volume of $ B(x_{\varepsilon(t)}, r_0) $ with respect to $g_0$.  But since $M$ is compact, then 
$  \left| B(x_{\varepsilon(t)}, r_0)\right| \ge C_1 r_0^{n}$, where the constant $C_1$ depends only on $(M, g_0)$.   It follows then from (2.12)  that 
$$ \sum_{j=1}^{\nu}\left|a_j(t)\right|  \le C_0 . \eqno (2.13) $$
Now, since $\widehat{u}$ is the orthogonal projection of $u$ onto $\mathcal{N}(P_0)$, then 
$\int_M( u-\widehat{u} ) dV_0 = 0 $.  This implies that 
$$\hbox{Vol}_{g_0}(M) a_0(t) = \int_M a_0(t) dV_0 = \int_M u(t) dV_0  - \sum_{j=1}^{\nu} a_j(t) \int_M \varphi_j dV_0 . \eqno (2.14)$$
We have,  by using the elementary inequality \ $ z \le e^z  \  \forall z \in \Ree$, 
$$\int_M u(t) \ dV_0 \le {1\over n} \int_M e^{nu(t)} dV_0  = {1\over n}\int_M e^{nu_0} dV_0  \eqno (2.15)$$
where we have used (1.12). 
Combining (2.13), (2.14) and (2.15) we obtain
$$ a_0(t) \le C_0 . \eqno (2.16) $$
Using the positivity of $P_0$ and the fact that  $k_p < 0$, it follows from (2.6) and (2.16) that 
 $$ |a_0(t)|  \le  C_0 . \eqno (2.17) $$
 Using again (2.6) we get also 
  $$\int_{M} P_0 u(t)\cdot u(t) \ dV_0  \le  C_0 . \eqno (2.18)$$

\noindent Now since $\mathcal { N(Q)}$ is of finite dimension,  it is clear from (2.2), (2.13) and (2.17) that $ \| \widehat u(t) \|_{H^{n/2}} \le C_0$, and since 
$$ \| u(t) -  \widehat u(t) \|_{H^{n/2}} \le C \int_{M} P_0 u(t)\cdot u(t) \ dV_0,$$
it follows from (2.18) that $u(t)$ is bounded in $H^{n\over 2}(M)$ uniformly in $t$.  Following the same arguments as in S. Brendle\cite{sB1}, one can prove that  $u(t)$ is bounded in $H^{k}(M)$ for all $k \ge n/2$ uniformly in time, so the evolution equation  (1.10)-(1.11) is defined for all time and moreover the solution converges to a metric  
 $g_{\infty}  = e^{2u_{\infty}}g_0$  with Q-curvature ${ k_p  \over \int_M f e^{2u_{\infty}} dV_0 } f$. 

\end{proof}

  \section{The positive case }
  
  \medskip

This section is devoted to the proof of Theorem 1.2 .

\begin{proof}[Proof of Theorem 1.2] Suppose that $u : [0,T) \times M \to \Ree $ with $u(0) = u_0$ is a local solution of (1.11)  where we suppose $f\equiv 1$ for simplicity, and denote by $u(t)$ the smooth function on $M$ defined by $u(t)(x) = u(t,x) \  \forall x \in M$.
Let $ \varphi \in \mathcal {N(Q)}$ such that 
$$\int_M \varphi e^{nu_0} dV_0 \not= 0.$$
 If  we multiply (1.11) by $ \varphi e^{nu(t) }$ and integrate over $M$, we get
$$
{ d \over  dt}  \int_M  \varphi  \ e^{nu(t)}\ dV_0  =  {n k_p \over 2 \int_M e^{nu(t)} dV_0 }  \int_M  \varphi  e^{nu(t)} \ dV_0 \  \ \forall t \in [0, T) .  \eqno (3.1)$$
But  the volume of $M$ is constant along the flow by (1.12),  that is,
$$ \int_M e^{nu(t)} dV_0 =   \int_M e^{nu_0} dV_0 .$$  
So   equation (3.1) becomes 
$$
{ d \over  dt}  \int_M  \varphi  \ e^{nu(t)}\ dV_0  =  {n k_p \over 2 \int_M e^{nu_0} dV_0 }  \int_M  \varphi  e^{nu(t)} \ dV_0 \ .  $$
Integrating this differential equation we get 
$$\int_M  \varphi  \ e^{nu(t)}\ dV_0 = e^{{nk_p \over 2 \int_M e^{nu_0} dV_0 } t} \int_M  \varphi  e^{nu_0} \ dV_0.$$
It follows that 
$$ e^{{nk_p \over 2 \int_M e^{nu_0} dV_0 } t}\left| \int_M  \varphi  e^{nu_0} \ dV_0\right| =\left| \int_M  \varphi  \ e^{nu(t)}\ dV_0\right|  \le \|\varphi\|_{L^{\infty}(M)}  \int_M  e^{nu_0} \ dV_0,$$
hence for any $t \in [0, T)$, we have 
$$ e^{{nk_p \over 2 \int_M e^{nu_0} dV_0 } t} \le { \|\varphi\|_{L^{\infty}(M)}  \int_M  e^{nu_0} \ dV_0 \over \left| \int_M  \varphi  e^{nu_0} \ dV_0\right| }.$$
This shows that the maximal time of existence $T$ must satisfy 
$$T \leq {2  \int_M e^{n u_0} \ dV_0 \over n k_p  } \log \left( {\|\varphi \|_{L^{\infty}(M)}  \int_M e^{n u_0} \ dV_0 \over \left|\int_M e^{n u_0} \phi \ dV_0 \right|}\right) .$$
This ends the proof of Theorem 1.2.
\end{proof}

 \bigskip

\begin{proof}[Proof of Remark 1.2] Let $\varphi \in \mathcal{N(Q)}$. We want to show that if $ v \in C^{\infty}(M)$ then  there exists a sequence $ (v_k)$  in $ C^{\infty} (M)$ such that $  \int_M \varphi e^{n v_k} \ dV_0 \not = 0  \ \forall k \in \Nee$ and \ $ (v_k)$ converges to $v$  in ${C^{\infty}(M)}$.  We  may suppose that $  \int_M e^{n v}   \varphi \ dV_0 = 0$ (else take $ v_k = v \ \forall k \in \Nee$), this implies in particular that  $\varphi$ is not of constant sign. Set $A^+$ = $\{x \in M : \varphi(x) > 0 \}$ and $A^- = \{x \in M : \varphi(x) < 0 \}$ and
let $h \in C^{\infty}(M) $ be a nonnegative function with support in $A^+$. Choose $\delta $  a positive real number sufficiently small such that $ \int_{A^+} h   \varphi \ dV_0 + \delta \int_{A^-}    \varphi \ dV_0 > 0$. So it follows that
$$\int_M (h+ \delta) \varphi \ dV_0 = \int_{A^+} (h+ \delta)  \varphi \ dV_0 + \int_{A^-} (h+ \delta ) \varphi \ dV_0 \ge \int_{A^+} h  \varphi \ dV_0 + \delta \int_{A^-}   \varphi \ dV_0 > 0 .$$
Hence if we set $w= {1  \over n} \log (h+ \delta)$, we have $  \int_M e^{n w}   \varphi \ dV_0 > 0$. It suffices to take  $ v_k = {1 \over n}  \log ( e^{nv} + {1 \over k} e^{nw} )$.
This ends the proof of Remark 1.2.

\end{proof}

\bigskip

\end{document}